\newtheorem{theorem}{Theorem}
\newtheorem{proposition}[theorem]{Proposition}
\newtheorem{lemma}[theorem]{Lemma}
\newtheorem{corollary}[theorem]{Corollary}
\newtheorem{claim}{Claim}[theorem]
\newcommand{\logconst}{2}
\newcommand{\const}{8}
\newcommand{\bigf}{\omega+2\lceil\logconst{}\log_2(\omega)\rceil+\const{}}
\newcommand{\smallf}{\omega+\lceil\logconst{}\log_2(\omega)\rceil+\const{}}
\newcommand{\I}{\mathcal{I}}
\title{Circle graphs are quadratically $\chi$-bounded}
\author{James Davies\thanks{Department of Combinatorics and Optimization, University of Waterloo, Waterloo, Canada. E-mail: \texttt{jgdavies@uwaterloo.ca}.} \ and Rose McCarty\thanks{Department of Combinatorics and Optimization, University of Waterloo, Waterloo, Canada. E-mail: \texttt{rose.mccarty@uwaterloo.ca}.}}
\date{}
\begin{document}

\maketitle
	
\begin{abstract}
	We prove that the chromatic number of a circle graph with clique number $\omega$ is at most $7\omega^2$.
\end{abstract}

\section{Introduction}
We prove the following result.

\begin{theorem}
\label{main-chi}
The chromatic number of a circle graph with clique number $\omega$ is at most $7\omega^2$.
\end{theorem}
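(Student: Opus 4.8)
The plan is to prove the sharper inequality $\chi(G)\le\chif$ for every circle graph $G$ with $\omega(G)\le\omega$; since $\chif=\omega\bigf$, one checks directly that this is at most $7\omega^2$ — trivially for $\omega\le 1$, where $G$ is edgeless, and from $2\lceil\logconst\log_2\omega\rceil+\const\le 6\omega$ once $\omega\ge 2$ — so Theorem~\ref{main-chi} follows. I would fix a chord diagram of $G$, cut the circle at a point that is not an endpoint of any chord, and read off an overlap representation: each vertex $v$ becomes an interval $[\ell_v,r_v]$ of the real line with all $2|V(G)|$ endpoints distinct, and $uv\in E(G)$ precisely when the two intervals strictly overlap, that is $\ell_u<\ell_v<r_u<r_v$ after possibly interchanging $u$ and $v$. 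Two elementary facts drive the argument: (a) a clique of $G$ is exactly a family of pairwise overlapping intervals whose left and right endpoints are sorted in the same order, and such a family has a ``core'' sub-interval $[\ell_{\max},r_{\min}]$ contained in every member; and (b) for any point $x$ of the line, the subgraph $G_x$ induced by the chords whose interval contains $x$ is a permutation graph, hence perfect, so $\chi(G_x)=\omega(G_x)\le\omega$.

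The engine is a decomposition lemma: every circle graph $G$ with $\omega(G)\le\omega$ has a vertex set $S$ with $\omega(G-S)\le\omega-1$ and $\chi(G[S])\le\bigf$. Granting this, I would delete such a set, repeat on the remainder (whose clique number has dropped by one), and iterate; since each of the $\le\omega$ stages introduces at most $\bigf$ new colours and the deleted parts are coloured independently, $\chi(G)\le\omega\bigf=\chif$. So the whole difficulty is the lemma, and within it the task of producing a single set $S$ that both meets every maximum clique (so $\omega$ genuinely drops) and carries only $O(\omega+\log\omega)$ colours; I would expect the colouring of $S$ to decompose into a bulk contribution of $\le\smallf$ plus a further $\lceil\logconst\log_2\omega\rceil$ colours coming from the recursion described next — precisely the difference $\bigf-\smallf$.

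For $S$ I would use the cores from (a): each maximum clique $K$ has a core interval $c_K$ lying inside all of its chords, and it suffices to take $S=\bigcup_{x\in P}G_x$ for a finite set $P$ of points meeting every $c_K$. No single point meets all the cores, so I would pierce them recursively along the line: pick a point that, within the current segment, is ``central'' for the cores still unmet, delete the chords through it (a perfect subgraph, $\le\omega$ colours by (b)), and recurse into the two sides, which share no edges and hence can share a palette. The crucial gain is that this recursion can be arranged to terminate in $O(\log\omega)$ levels before every maximum clique has been met, with the constant slack $\const$ absorbing boundary effects, the two ends of each core, the small base cases in $\omega$, and the occasional level at which the clique number of a side drops outright; combining the per-level colourings while keeping non-interacting sides on a common palette then yields $\chi(G[S])\le\bigf$.

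The step I expect to be the main obstacle is exactly this claim that the recursion terminates in $O(\log\omega)$ levels, independently of $|V(G)|$. The naive progress measure — the number of maximum cliques on a side — is not controlled by $\omega$ and does not halve cleanly under the split, so the proof must run on a sturdier invariant. The most plausible one is a height function $\phi(v)\in\{1,\dots,\omega\}$ recording the size of the largest clique in which $v$ is the leftmost chord, coupled with the observation that on any side the chords attaining the maximum value of $\phi$ sit in ``permutation position''; one then has to verify both that deleting these top chords truly lowers $\omega$ on that side and that they admit a proper colouring with the help of the extra $\lceil\logconst\log_2\omega\rceil$ colours. Everything else — checking (a) and (b), the perfect-graph input, the iteration of the first paragraph, and the final arithmetic $\chif\le 7\omega^2$ — should be routine.
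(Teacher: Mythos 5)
Your route is genuinely different from the paper's and, as written, has a gap that I do not think can be patched without essentially reinventing the paper's central device. The paper first proves (Proposition~\ref{thm:extends} together with Lemma~\ref{lemma:isPerm}) that $V(G)$ can be partitioned into at most $\bigf\le 7\omega$ sets each inducing a permutation graph, and then deduces the chromatic bound by colouring each part with $\omega$ colours via perfection; the key object is a \emph{complete pillar assignment}, an ordered, coloured set of piercing points to which each interval is assigned greedily by the order, and the additive $O(\log\omega)$ overhead comes from Lemma~\ref{lemma:unordered}. You instead iterate a clique-hitting decomposition ($\omega(G-S)\le\omega-1$ with $\chi(G[S])\le\bigf$) $\omega$ times. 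That is a different lemma --- a global partition into permutation graphs versus a local maximum-clique transversal --- and one which neither the paper proves nor, as far as I can see, follows from what the paper proves; you would also lose Theorem~\ref{main-perm}, which is a structural result of independent interest.

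The concrete gap is in the colour arithmetic for $\chi(G[S])\le\bigf$. Your recursive piercing removes, at each level, the chords through the chosen points; across the non-interacting sides of a single level these cost at most $\omega$ colours by perfection of permutation graphs. But chords removed at \emph{different} levels can cross (a chord through the level-one point can cross a chord through a level-two point inside one of its sides), so levels cannot share a palette, and $O(\log\omega)$ levels give $O(\omega\log\omega)$ colours, not the additive $\omega+O(\log\omega)=\bigf$ that the rest of your computation needs; the reserved $\lceil\logconst\log_2\omega\rceil$ extra colours cannot absorb a multiplicative $\log\omega$ factor. The paper avoids this by not stripping off chords level by level at all: every interval is assigned to the \emph{first} pillar it contains under a total order constructed so that distinct pillars of the same colour are never simultaneously assigned to overlapping intervals (axiom (1) of a pillar assignment, enforced via Lemma~\ref{lemma:unordered}), and that disambiguation is exactly what turns the $\log\omega$ contribution from multiplicative into additive. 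Your sketch has no analogue of this ordering, the bound on the number of recursion levels is asserted rather than proved, and the height-function fallback is too underspecified to substitute. Until the decomposition lemma is actually established, the theorem does not follow from your outline.
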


\noindent We prove Theorem~\ref{main-chi} as a consequence of the following.

\begin{theorem}
\label{main-perm}
The vertex set of a circle graph with clique number $\omega$ can be partitioned into $7\omega$ sets, each of which induces a permutation graph.
\end{theorem}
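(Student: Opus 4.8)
The plan is to fix a chord diagram $D$ representing $G$ in the disc and to rely on the following sufficient condition for a permutation graph: \emph{if every chord of a set $S\subseteq D$ crosses some fixed chord $\gamma$} (not necessarily belonging to $D$), \emph{then $G[S]$ is a permutation graph}. Indeed, the two endpoints of $\gamma$ split the circle into arcs $A_1,A_2$, each chord of $S$ meets each of $A_1,A_2$ exactly once, and ordering the chords of $S$ by their endpoints on $A_1$ and by their endpoints on $A_2$ exhibits $G[S]$ as the comparability graph of a two-dimensional partial order, hence as a permutation graph. I will also use that an edgeless graph is a permutation graph and that a disjoint union of permutation graphs is a permutation graph. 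Thus it suffices to partition $D$ into at most $7\omega$ sets, each of which is a disjoint union of ``crossing families'' $N(c)=\{\text{chords crossing }c\}$ together with edgeless pieces.

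The tool for producing such a partition is a recursive decomposition. In the current sub-disc, pick a chord $c$; let $N(c)$ be the set of chords still present that cross $c$; delete $c$ and $N(c)$, and recurse separately on the two sub-discs into which $c$ cuts the current one (each remaining chord lies entirely in one of them, and chords in different sub-discs do not cross). This builds a rooted binary tree whose nodes are the chosen chords, and two observations drive the count. First, the set $C^{*}$ of all chosen chords is an independent set of $G$: two chosen chords are either in an ancestor--descendant relation in the tree, in which case one is nested in the other and they do not cross, or they lie in different sub-discs of their lowest common ancestor, hence are separated by it and do not cross; so $G[C^{*}]$ is a permutation graph. Second, $V(G)\setminus C^{*}=\bigcup_{c\in C^{*}}N(c)$; each $G[N(c)]$ is a permutation graph by the criterion above with $\gamma=c$; and if $c,c'$ are incomparable in the tree then $N(c)$ and $N(c')$ lie in disjoint sub-discs and have no edge between them, so for any antichain $\mathcal A$ of the tree the set $\bigcup_{c\in\mathcal A}N(c)$ induces a permutation graph. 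Therefore $G$ is partitioned into $1+k$ permutation graphs, where $k$ is the minimum number of antichains needed to cover $\{c\in C^{*}:N(c)\ne\emptyset\}$; by Mirsky's theorem $k$ equals the length of the longest chain $c_1\supset c_2\supset\cdots\supset c_m$ of chosen chords, each of which is crossed by a present chord within its sub-disc.

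The crux, which I expect to be the main obstacle, is to choose the chords $c$ so that this quantity $k$ is at most $7\omega-1$. The intention is to pick at each stage a suitably ``outermost'' or ``heaviest'' chord and then prove that a chain $c_1\supset\cdots\supset c_m$ of nested chosen chords, each crossed inside its sub-disc, cannot be long: such a configuration should be pumped into $\omega+1$ pairwise crossing chords, contradicting $\omega(G)=\omega$. I expect the pumping to cost a logarithmic factor, so that one first obtains a bound of the form $\omega+2\lceil 2\log_2\omega\rceil+8$ permutation graphs; this is at most $7\omega$ when $\omega\ge 2$, and when $\omega\le 1$ the graph $G$ is edgeless so one permutation graph suffices, whence Theorem~\ref{main-perm} would follow. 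Making the pumping argument precise, choosing the right chord at each recursive step, and extracting the clique, is the technical heart of the proof.
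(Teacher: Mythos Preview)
Your recursive decomposition sets up a clean framework, but the crux---bounding the longest chain in the recursion tree by a function of $\omega$ alone---cannot be achieved by any choice rule. Consider the following family with $\omega=2$: take pairwise nested chords $c_1\supset c_2\supset\cdots\supset c_m$, and for each $i$ a short chord $d_i$ lying inside $c_{i-1}$ that crosses $c_i$ and nothing else (place both ends of $d_i$ just to either side of one endpoint of $c_i$). The overlap graph is a perfect matching $\{c_id_i:1\le i\le m\}$, so $\omega=2$. Every chord has exactly one neighbour, hence every pick removes exactly two chords and the recursion tree has $m$ nodes. Since each node has at most two children, the tree has depth at least $\lfloor\log_2 m\rfloor$ regardless of which chord you pick at each step; with the ``outermost'' rule the depth is exactly $m$. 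Thus the longest chain of chosen chords with $N(c)\ne\emptyset$ is unbounded even though $\omega=2$, and no pumping argument can manufacture a triangle from this configuration, because there is none.

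The paper's proof takes a structurally different route. Instead of recursing on chords of $D$, it works in the interval model and selects \emph{pillars} (points of $(0,1)$), assigning each interval to the first pillar it contains under a carefully chosen total order together with a colouring of the pillars (a ``pillar assignment''). The key input is not a depth bound but a Capoyleas--Pach--type counting lemma (Lemma~\ref{lemma:counting}): for any finite pillar set $P$ and $p_1<p_2$ in $P$, at most $\omega|P|$ pairs of segments of $P$ straddle $(p_1,p_2)$. This is the only place $\omega$ enters. The main argument is extremal: one takes a pillar assignment of bounded ``maximum degree'' that colours as many intervals as possible; if some interval is missed, the counting lemma shows that fewer than $\omega^2$ new pillars inside its segment suffice to control all interactions, and these are ordered and coloured with $\lceil 2\log_2\omega\rceil$ fresh colours via a balanced bisection (Lemma~\ref{lemma:unordered}), contradicting maximality. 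That is where the bound $\omega+2\lceil 2\log_2\omega\rceil+8$ you anticipate actually comes from, but it is not reachable from your chord recursion.
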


\noindent Theorem~\ref{main-perm} implies Theorem \ref{main-chi} since permutation graphs are perfect (see~\cite{golumbicBook}). 

A class of graphs is \textit{$\chi$-bounded} if the chromatic number of each graph in the class is bounded above by some fixed function of its clique number. Such a function is called a \textit{$\chi$-bounding function} for the class. If a polynomial $\chi$-bounding function exists, then the class is \textit{polynomially $\chi$-bounded}. (See Scott and Seymour~\cite{scott2018chiSurvey} for a survey of $\chi$-boundedness.) 

Gy\'arf\'as~\cite{gyarfas1985chromatic} proved that the class of circle graphs is $\chi$-bounded. Kostochka and Kratochv\'il~\cite{kostochka1997covering} showed that there exists a $\chi$-bounding function of order $2^\omega$ for the class of polygon circle graphs, which includes all circle graphs. (A \textit{polygon circle graph} is the intersection graph of a finite set of polygons inscribed in the unit circle.) We further improve on these results by showing that the class of circle graphs is polynomially $\chi$-bounded. Using a result of Krawczyk and Walczak~\cite{krawczyk2017line}, we obtain the following corollary.

\begin{corollary}
The chromatic number of a polygon circle graph with clique number $\omega$ is at most $7\omega^4$.
\end{corollary}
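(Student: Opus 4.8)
The plan is to obtain the corollary directly from Theorem~\ref{main-chi} together with the reduction of Krawczyk and Walczak~\cite{krawczyk2017line}, so that essentially no new work is needed. Recall that circle graphs are precisely the polygon circle graphs in which every polygon is a chord, i.e.\ a two-vertex polygon, so polygon circle graphs form a strictly larger class; Kostochka and Kratochv\'il~\cite{kostochka1997covering} bounded their chromatic number by $2^\omega$, and Krawczyk and Walczak sharpened the underlying reduction to a polynomial one. Concretely, they show that if $f$ is any $\chi$-bounding function for the class of circle graphs, then every polygon circle graph with clique number $\omega$ has chromatic number at most $f(\omega^2)$. Granting this, Theorem~\ref{main-chi} lets us take $f(\omega)=7\omega^2$, and then a polygon circle graph with clique number $\omega$ has chromatic number at most $f(\omega^2)=7(\omega^2)^2=7\omega^4$, which is the assertion of the corollary.

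To indicate the shape of the Krawczyk--Walczak reduction --- the one ingredient I would quote rather than reprove --- take a polygon representation of a polygon circle graph $G$ and replace each polygon $P_i$, with cyclically ordered vertices $v_{i,1},\dots,v_{i,k_i}$ on the circle, by the family of its side chords $v_{i,1}v_{i,2},\dots,v_{i,k_i}v_{i,1}$, perturbed into general position. A short case check shows that some side chord of $P_i$ crosses some side chord of $P_j$ exactly when $P_i$ and $P_j$ intersect, whereas the side chords of a single polygon are pairwise non-crossing. Hence, in the resulting circle graph, every clique uses at most one chord from each polygon, and the polygons whose chords occur form a clique of $G$; so this circle graph has clique number at most $\omega$.

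The main obstacle is that $G$ is only a \emph{quotient} of this circle graph --- contract all side chords of each polygon to one vertex --- and the chromatic number is not monotone under such contractions, so one cannot simply colour the circle graph and push the colouring down to $G$. Circumventing this is exactly the content of the Krawczyk--Walczak argument (which runs through an on-line colouring game), and it is there that the relevant clique parameter inflates from $\omega$ to $\omega^2$. Since that step is available off the shelf, all that remains on our side is to apply it with $f(\omega)=7\omega^2$ and record the arithmetic above; no further estimates are required.
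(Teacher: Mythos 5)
Your proposal is correct and is essentially the paper's own (unwritten) proof: the paper also obtains the corollary by invoking the Krawczyk--Walczak reduction for interval filament graphs (of which polygon circle graphs are a subclass) and plugging in the bound $f(\omega)=7\omega^2$ from Theorem~\ref{main-chi}, giving $f(\omega^2)=7\omega^4$. The additional sketch you give of the side-chord construction and of why a naive quotient argument fails is a fair gloss on the Krawczyk--Walczak on-line argument, but it is not reproduced in the paper, which simply cites the result.
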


\noindent In fact, the result of Krawczyk and Walczak applies to the more general class of interval filament graphs. An \textit{interval filament graph} is the intersection graph of a finite set of continuous non-negative functions which are defined on a closed interval and map the ends of the interval to zero.

Kostochka~\cite{kostochka1988upper, kostochka2004coloring} showed that every $\chi$-bounding function for the class of circle graphs is $\Omega(\omega \log \omega)$. It would be interesting to close the gap and determine the precise asymptotic growth rate of the optimal $\chi$-bounding function. Tight bounds are known for triangle-free circle graphs; the maximum chromatic number of a triangle-free circle graph is five, with the upper bound due to Kostochka~\cite{kostochka1988upper} and the lower bound due to Ageev~\cite{ageev1996triangle}. 

Our proof approach is partially motivated by a Tur\'{a}n-type theorem for circle graphs of Capoyleas and Pach~\cite{capoyleas1992turan} and the proof of Kim, Kwon, Oum, and Sivaraman~\cite{kim2018classes} that the closure of a hereditary, polynomially $\chi$-bounded class under $1$-joins is also polynomially $\chi$-bounded. Despite this motivation, our proof is self-contained other than the fact that permutation graphs are perfect.

\section{Preliminaries}
For $a,b \in \mathbb{R}$ with $a<b$, we write $(a,b)$ for the open interval on the real line. An \textit{interval system} is a finite set of open subintervals of $(0,1)$ so that no two distinct intervals share an end and no interval has $0$ or $1$ as an end. The \textit{overlap graph $G(\I)$ of an interval system $\I$} is the graph with vertex set $\I$ so that two vertices are adjacent if they overlap as intervals; two intervals \textit{overlap} if they intersect and neither is contained in the other. A graph is a \textit{circle graph} if it is isomorphic to the overlap graph of an interval system. (Equivalently, circle graphs are the intersection graphs of chords on a circle.)

A \textit{pillar} of an interval system $\I$ is a point in $(0,1)$ that is not the end of any interval in $\I$. A graph is a \textit{permutation graph} if it is isomorphic to the overlap graph of an interval system so that there exists a pillar that is contained in every interval. It will be important later that the disjoint union of permutation graphs is also a permutation graph.

Let $\I$ be an interval system. We would like to colour $\I$ with few colours so that each colour induces a permutation graph in $G(\I)$. For any pillar $p$ of $\I$, the set of all intervals in $\I$ that contain $p$ induces a permutation graph. We will choose a finite set $P$ of pillars of $\I$ and assign each interval of $\I$ to one pillar in $P$ that is contained in it. An interval may contain several pillars in $P$, so, to disambiguate the process, we take a total ordering $(P,\prec)$ of $P$ and assign an interval to the first pillar in $P$ under the
given ordering. (Note that $P$ is a set of real numbers, but our ordering $(P,\prec)$ will typically not be the linear ordering given by the reals.)

For this assignment process to work, each interval in $\I$ must contain a pillar in $P$. However, we may require arbitrarily many pillars in order to hit each interval.  Fortunately, the disjoint union of permutation graphs is a permutation graph. So we overcome the problem by colouring $P$ and then colouring each interval with the colour of the pillar it is assigned to. In doing this, one needs to be careful that the monochromatic components remain permutation graphs. To this end, we insist that no two overlapping intervals in $\I$ are assigned to distinct pillars of the same colour.

Let $P$ be a finite set of pillars of $\I$ and let $(P, \prec)$ be a total ordering of $P$. We say an interval $I \in \I$ \textit{is assigned to a pillar $p \in P$ with respect to $(P,\prec)$} if $I$ contains $p$ and all other pillars in $P \cap I$ occur later under $(P,\prec)$. If the ordering is clear, we will just say that $I$ \textit{is assigned to} $p$. Notice that each interval in $\I$ that contains a pillar in $P$ is assigned to exactly one pillar.  If additionally $c:P \rightarrow \mathbb{Z}^+$ is a colouring of $P$, we write $\phi_{(P, \prec, c)}$ for the partial, improper colouring of $\I$ where an interval in $\I$ that is assigned to a pillar $p$ is given the colour $c(p)$.

A \textit{pillar assignment} of an interval system $\I$ is a tuple $(P, \prec, c)$ so that $P$ is a finite set of pillars, $(P, \prec)$ is a total ordering, and $c:P \rightarrow \mathbb{Z}^+$ is a colouring satisfying the following:
\begin{itemize}
    \item[(1)] if $I_1, I_2 \in \I$ overlap and are assigned to pillars $p_1, p_2 \in P$ with $c(p_1)=c(p_2)$, then $p_1 = p_2$.
\end{itemize}
\noindent A pillar assignment $(P, \prec, c)$ is \emph{complete} if every interval in $\I$ contains a pillar in $P$. So for a complete pillar assignment $(P, \prec, c)$, the function $\phi_{(P, \prec, c)}$ colours all intervals in $\I$. Here is the first main observation.

\begin{lemma}
\label{lemma:isPerm}
    For every interval system $\I$, pillar assignment $(P, \prec, c)$ of $\I$, and $k \in \mathbb{N}$, the graph $G(\{I \in \I: \phi_{(P, \prec, c)}(I)=k\})$ is a permutation graph.
\end{lemma}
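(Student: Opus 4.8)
The plan is to decompose the graph $G(\I_k)$, where $\I_k := \{I \in \I : \phi_{(P,\prec,c)}(I)=k\}$, into induced subgraphs with no edges between them, each of which is immediately recognizable as a permutation graph, and then invoke the fact (noted in the preliminaries) that a disjoint union of permutation graphs is a permutation graph.

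First I would partition $\I_k$ according to pillar assignment. Let $P_k := \{p \in P : c(p)=k\}$, and for each $p \in P_k$ let $\I_p$ denote the set of intervals of $\I$ that are assigned to $p$ with respect to $(P,\prec)$. Since every interval of $\I$ containing a pillar in $P$ is assigned to exactly one pillar, and $\phi_{(P,\prec,c)}$ gives such an interval the colour of that pillar, every interval in $\I_k$ lies in exactly one set $\I_p$ with $p \in P_k$. Thus $\{\I_p : p \in P_k\}$ is a partition of $\I_k$.

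Next I would use condition~(1) of the pillar assignment to show that $G(\I_k)$ has no edge joining distinct parts of this partition. Suppose $I_1 \in \I_{p_1}$ and $I_2 \in \I_{p_2}$ are adjacent in $G(\I_k)$ with $p_1, p_2 \in P_k$. Then $I_1$ and $I_2$ overlap and are assigned to $p_1$ and $p_2$ with $c(p_1)=k=c(p_2)$, so condition~(1) forces $p_1=p_2$. Hence $G(\I_k)$ is the disjoint union, over $p \in P_k$, of the induced subgraphs $G(\I_p)$.

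Finally I would observe that each $G(\I_p)$ is a permutation graph: every interval in $\I_p$ contains the point $p$, and $p$ is a pillar of $\I$ (hence of the subsystem $\I_p \subseteq \I$), so $\I_p$ is an interval system with a pillar contained in every one of its intervals, which is exactly the definition of a permutation graph. Since a disjoint union of permutation graphs is a permutation graph, it follows that $G(\I_k)$ is a permutation graph. I do not expect a genuine obstacle here; the only thing to keep track of is the degenerate cases where some $\I_p$ or $P_k$ is empty, which cause no trouble since the empty graph is a permutation graph.
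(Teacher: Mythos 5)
Your proof is correct and takes essentially the same approach as the paper: the paper argues that each connected component of $G(\I_k)$ consists of intervals all assigned to a single common pillar (which follows from condition~(1)), whereas you make the partition by assigned pillar explicit up front and verify no edges cross parts, but both arguments reduce to condition~(1) plus the closure of permutation graphs under disjoint union.
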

\begin{proof}
    Let $\I' \subseteq \I$ so that $G(\I')$ is a connected component of $G(\{I \in \I: \phi_{(P, \prec, c)}(I)=k\})$. By the definition of a pillar assignment, there exists $p \in P$ so that every interval in $\I'$ contains $p$. Thus $G(\I')$ is a permutation graph. The lemma follows since the disjoint union of permutation graphs is a permutation graph.
\end{proof}

In the next section, we will prove that every interval system whose overlap graph has clique number $\omega$ has a complete pillar assignment using at most $\bigf{}$ colours. By Lemma~\ref{lemma:isPerm} and since $\bigf{} \leq 7\omega$ for $\omega \geq 2$, this implies Theorem~\ref{main-perm}. (Note that Theorem~\ref{main-perm} is trivially true for $\omega = 1$). Next we will prove a lemma that roughly says that not too many pillars can affect each other, but first we need some more definitions.

For a finite set $P \subset (0,1)$, a \textit{segment of $P$} is an open interval with ends in $P \cup \{0,1\}$ that contains no point in $P$. So there is a unique partition of $(0,1)\setminus P$ into $|P|+1$ segments.

Now, let $\I$ be an interval system and let $P$ be a finite set of pillars of $\I$. Notice that each end of an interval in $\I$ is contained in a segment of $P$. When the interval system is clear, for pillars $p_1<p_2$, we say that the \textit{$P$-degree of $(p_1, p_2)$} is the number of sets $\{S_1, S_2\}$ so that
\begin{itemize}\itemsep0em
    \item[(1)] $S_1$ is a segment of $P$ that is disjoint from $(p_1,p_2)$,
    \item[(2)] $S_2$ is a segment of $P \cup \{p_1,p_2\}$ that is contained in $(p_1,p_2)$, and
    \item[(3)] there exists an interval in $\cal I$ with one end in $S_1$ and one end in $S_2$.
\end{itemize}

\noindent We happen to only apply this definition when either $p_1,p_2 \in P$ or $(p_1, p_2)$ is contained in a segment of $P$. We denote the $P$-degree of $(p_1, p_2)$ by $d_P(p_1,p_2)$. If $J=(p_1, p_2)$ we will write $d_P(J)$ for $d_P(p_1, p_2)$. Our next lemma can be seen as a permutation graph version of a theorem of Capoyleas and Pach~\cite{capoyleas1992turan}. 

\begin{lemma}
\label{lemma:counting}
    Let $\I$ be an interval system whose overlap graph has clique number $\omega$. Let $P$ be a finite set of pillars of $\I$, and let $p_1, p_2 \in P$ with $p_1<p_2$. Then $d_P(p_1, p_2)\leq \omega|P|$.
\end{lemma}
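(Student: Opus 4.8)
The plan is to bound $d_P(p_1,p_2)$ by studying the intervals of $\I$ that ``cross'' the interval $(p_1,p_2)$. Say that $I\in\I$ is \emph{crossing} if it has one end in $(0,p_1)\cup(p_2,1)$ and one end in $(p_1,p_2)$; these are exactly the intervals that can witness a pair counted by $d_P(p_1,p_2)$, and each of them contains exactly one of $p_1,p_2$ (it cannot contain both, having an end in $(p_1,p_2)$, and it contains one, having ends on both sides of one of $p_1,p_2$). For a crossing interval $I$, let $x(I)\in(p_1,p_2)$ denote its end in $(p_1,p_2)$, and let $y(I)$ denote its other end if that end is larger than $p_2$, or that end plus $1$ if it is smaller than $p_1$; so $x(I)<p_2<y(I)$.

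The first step is to show that the overlap graph $G(\I')$ of the set $\I'$ of crossing intervals is a permutation graph. Viewing $(0,1)$ as a circle by identifying $0$ and $1$, each interval of $\I$ becomes a chord, two intervals of $\I$ overlap precisely when their chords cross, and the assignment $I\mapsto(x(I),y(I))$ amounts to re-cutting this circle at $p_1$ instead of at $0$; hence it preserves the overlap relation. Since $p_2$ lies in the interior of every $(x(I),y(I))$, after rescaling the set $\{(x(I),y(I)):I\in\I'\}$ is an interval system with a pillar contained in every interval, so its overlap graph---and therefore $G(\I')$---is a permutation graph. (If one prefers to avoid the circle picture, one verifies directly, splitting into the cases ``$I$ and $J$ both contain $p_1$'', ``both contain $p_2$'', and ``$I$ contains $p_1$ and $J$ contains $p_2$'', that crossing intervals $I,J$ overlap if and only if $x(I)-x(J)$ and $y(I)-y(J)$ have the same sign.) Getting this structural claim right is the main obstacle; the rest is counting.

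Since $G(\I')$ is an induced subgraph of $G(\I)$ it has clique number at most $\omega$, and permutation graphs are perfect, so $\I'$ can be partitioned into at most $\omega$ sets $\I'_1,\dots,\I'_\omega$, each consisting of pairwise non-overlapping intervals. For every pair $\{S_1,S_2\}$ counted by $d_P(p_1,p_2)$, fix one crossing interval witnessing it; let $\I^{\star}$ be the set of these witnesses, so $|\I^{\star}|=d_P(p_1,p_2)$. It suffices to prove $|\I^{\star}\cap\I'_s|\le|P|$ for each $s$.

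Fix $s$ and write $\I^{\star}\cap\I'_s=\{I_1,\dots,I_t\}$ with $x(I_1)<\dots<x(I_t)$. Under the map above these correspond to pairwise non-overlapping intervals all containing $p_2$, hence to pairwise nested intervals, which shrink as their left end $x$ grows; thus $y(I_1)>\dots>y(I_t)$. Let $c$ be the number of pillars of $P$ strictly between $p_1$ and $p_2$, so $(p_1,p_2)$ contains exactly $c+1$ segments of $P$ and the remaining $|P|-c$ segments of $P$ are disjoint from $(p_1,p_2)$. As $k$ runs from $1$ to $t$, the segment of $P$ containing $x(I_k)$ moves monotonically through the $c+1$ segments inside $(p_1,p_2)$, while the segment of $P$ containing the other end of $I_k$ moves monotonically through the $|P|-c$ segments outside $(p_1,p_2)$, taken in order of increasing $y$. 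So, for $k=1,\dots,t$, the pair consisting of the segment of $P$ containing $x(I_k)$ and the segment of $P$ containing the other end of $I_k$ traces a monotone staircase in a grid with $c+1$ columns and $|P|-c$ rows; such a staircase uses at most $(c+1)+(|P|-c)-1=|P|$ cells, and these cells are distinct since distinct witnesses witness distinct pairs. Hence $t\le|P|$, and summing over $s$ gives $d_P(p_1,p_2)\le\omega|P|$.
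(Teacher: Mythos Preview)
Your proof is correct, and it shares the main idea with the paper's proof: both observe that the intervals overlapping $(p_1,p_2)$ induce a permutation graph (you phrase this as cutting the chord circle at $p_1$ instead of at $0$; the paper gives an explicit reflection), and both then use perfection to decompose into $\omega$ independent sets. The difference is in how the $\omega=1$ case is handled. The paper runs an induction on $|\I|+|P|$: once $\omega=1$, it argues that if $|P|>2$ then every segment of $P$ contributing at least two to $d_P(p_1,p_2)$ would force two overlapping intervals at $p_1$, a contradiction, so by induction one may peel off a pillar. You instead give a direct count: after fixing one witness per pair, the witnesses in a single colour class are pairwise nested under your map, so the sequence of (inside segment, outside segment) pairs is coordinatewise monotone with all pairs distinct, i.e.\ a monotone lattice path in a $(c+1)\times(|P|-c)$ grid, which visits at most $|P|$ cells. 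Your route is a little longer to write but avoids the induction entirely and makes the $\omega|P|$ bound transparent as ``$\omega$ staircases of length $|P|$''; the paper's induction is terser but the step ``by induction we may assume each segment contributes at least two'' hides a small amount of work.
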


\begin{proof}
    We proceed by induction on $|\I|+|P|$. If $|\I|=0$, then $d_P(p_1, p_2)=0$, and the lemma holds. By induction, we may assume that all intervals in $\I$ overlap with $(p_1, p_2)$, as we can delete all other intervals from $\I$ without changing $d_P(p_1, p_2)$. 
    
    \begin{claim}
    The graph $G(\I)$ is a permutation graph.
    \end{claim}
    \begin{proof}
    For each $i\in \{1,2\}$, define $\I_i$ as the set of all intervals in $\I$ that contain $p_i$. Then $\I$ is the disjoint union of $\I_1$ and $\I_2$. Let $a$ be the largest end of an interval in $\I_1$, subject to satisfying $a<p_1$. Let $b$ be the largest end of an interval in $\I_2$. Now fix some $\epsilon>0$ so that $1-\epsilon a >b$. Then $\{(d, 1-\epsilon c): (c,d) \in \I_1\}\cup \I_2$ is an interval system with overlap graph isomorphic $G(\I)$ so that every interval contains $p_2$.
    \end{proof}
    
    By the claim, $G(\I)$ is perfect and so has a proper colouring using $\omega$ colours. If $\omega\geq 2$, then the lemma holds by induction, applying the lemma separately to the intervals of each colour. So we may assume that $\omega =1$. 
    
    If $|P|=2$, then $d_P(p_1, p_2)\leq 2 = \omega |P|$, and the lemma holds. Then by induction, we may assume that each segment of $P$ contributes at least two to $d_P(p_1,p_2)$. Let $S_1$ and $S_2$ be the two segments of $P$ having $p_1$ as an end. For each $i\in \{1,2\}$, there is an interval $I_i \in \I$ that has an end in $S_i$ and no end in $S_{3-i}$. Then $I_1$ and $I_2$ overlap, a contradiction to the fact that $\omega = 1$.
\end{proof}

Lemma~\ref{lemma:counting} is the only way we will use the clique number of the overlap graph. Let $\I$ be an interval system whose overlap graph has clique number $\omega$. We will choose a pillar assignment $(P, \prec,c)$ using at most $\bigf{}$ colours so that $\phi_{(P, \prec, c)}$ colours as many intervals in $\I$ as possible, subject to satisfying an additional property. The additional property will be similar to saying that every segment of $P$ has $P$-degree at most $\smallf{}$. However we need a different notion which incorporates the ordering of the pillars. We give this definition next.

Let $\I$ be an interval system, let $P$ be a finite set of pillars of $\I$, and let $(P, \prec)$ be a total ordering of $P$. For an open subinterval $J$ of $(0,1)$ that is contained in a segment of $P$, the \textit{$(P, \prec)$-degree of $J$}, denoted $d_{(P, \prec)}(J)$, is the number of pillars $p \in P$ so that there exists an interval in $\I$ with an end in $J$ that is assigned to $p$ with respect to $(P, \prec)$. If $J = (p_1, p_2)$ we will write $d_{(P, \prec)}(p_1,p_2)$ for $d_{(P,\prec)}(J)$. Observe that $d_{(P,\prec)}(J)\leq d_P(J)$ since two intervals in $\I$ with ends in the same two segments of $P$ are assigned to the same pillar with respect to $(P,\prec)$. We say that the \textit{maximum degree of $(P,\prec)$} is the maximum $(P,\prec)$-degree of a segment of $P$. We now prove one final lemma.

    \begin{lemma}
    \label{lemma:unordered}
        Let $\I$ be an interval system, let $k \in \mathbb{Z}^+$, and let $P^*$ be a set of at most $2^k-1$ pillars of $\I$. Then there exist a total ordering $(P^*, \prec^*)$ of $P^*$ and a colouring $c^*:P^* \rightarrow \{1,2,\ldots, k\}$ so that $(P^*, \prec^*, c^*)$ is a pillar assignment and the maximum degree of $(P^*, \prec^*)$ is at most $k$.
    \end{lemma}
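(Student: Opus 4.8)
The plan is to prove Lemma~\ref{lemma:unordered} by induction on $k$. When $k=1$ we have $|P^*|\le 1$, and any total ordering of $P^*$ together with the constant colouring $c^*\equiv 1$ works: there is at most one pillar available, so the defining condition of a pillar assignment is vacuous and every segment has $(P^*,\prec^*)$-degree at most $1$.

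For the inductive step with $k\ge 2$, I would choose a \emph{median} pillar $p^*\in P^*$: one such that at most $2^{k-1}-1$ pillars of $P^*$ are smaller than $p^*$ and at most $2^{k-1}-1$ are larger. Let $P_L$ and $P_R$ be the sets of pillars of $P^*$ below and above $p^*$. Applying the inductive hypothesis with $k-1$ to $P_L$ and to $P_R$ yields orderings $(P_L,\prec_L)$, $(P_R,\prec_R)$ and colourings $c_L,c_R$ into $\{1,\dots,k-1\}$ that are pillar assignments of $\I$ with maximum degree at most $k-1$. I then set $c^*(p^*)=1$, $c^*(p)=c_L(p)+1$ for $p\in P_L$, and $c^*(p)=c_R(p)+1$ for $p\in P_R$ (so $c^*$ maps into $\{1,\dots,k\}$), and I let $\prec^*$ be any total ordering of $P^*$ that places $p^*$ first and restricts to $\prec_L$ on $P_L$ and to $\prec_R$ on $P_R$. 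The point of putting $p^*$ first is the following dichotomy: an interval of $\I$ that contains $p^*$ is assigned to $p^*$ with respect to $\prec^*$, while an interval that does not contain $p^*$ lies entirely in $(0,p^*)$ or entirely in $(p^*,1)$, and there its $\prec^*$-assignment is exactly its $\prec_L$-assignment (resp.\ $\prec_R$-assignment), since the pillars of $P^*$ it meets all lie in $P_L$ (resp.\ $P_R$) and $\prec^*$ restricted to $P_L$ equals $\prec_L$ (resp.\ to $P_R$ equals $\prec_R$).

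Given this dichotomy, the verification of the pillar-assignment condition runs as follows. If overlapping intervals $I_1,I_2$ are assigned to $p_1,p_2$ with $c^*(p_1)=c^*(p_2)$, then either the common colour is $1$, forcing $p_1=p_2=p^*$, or it exceeds $1$, so $p_1,p_2\neq p^*$ and hence $p^*\notin I_1\cup I_2$. In the latter case $I_1$ and $I_2$ must lie on the same side of $p^*$ --- otherwise $p^*$ separates them and they do not overlap --- and then $p_1=p_2$ follows from the pillar-assignment condition for $(P_L,\prec_L,c_L)$ or $(P_R,\prec_R,c_R)$, using that $c^*$ is a shift of $c_L$ (resp.\ $c_R$) by $1$. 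For the degree bound, fix a segment $S$ of $P^*$; it lies on one side of $p^*$, say $\overline S\subseteq[0,p^*]$. Any interval with an end in $S$ that is assigned with respect to $\prec^*$ to a pillar other than $p^*$ avoids $p^*$ and hence lies in $(0,p^*)$, so it is assigned to the same pillar of $P_L$ with respect to $\prec_L$. Consequently the pillars of $P^*\setminus\{p^*\}$ to which intervals with an end in $S$ are assigned all lie in $P_L$, and their number is at most $d_{(P_L,\prec_L)}(S)\le d_{(P_L,\prec_L)}(S')$, where $S'$ is the segment of $P_L$ containing $S$; by the inductive maximum-degree bound this is at most $k-1$. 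Adding the single possible extra pillar $p^*$ gives $d_{(P^*,\prec^*)}(S)\le k$, completing the induction.

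I expect the crux to be the maximum-degree step: the key observation is that an interval assigned by $\prec^*$ to anything other than $p^*$ necessarily misses $p^*$ and therefore stays on one side of it, so that its assignment is governed by the inductive sub-instance; one also has to accommodate the fact that $S$ need not itself be a segment of $P_L$, which is handled by the obvious monotonicity $d_{(P_L,\prec_L)}(S)\le d_{(P_L,\prec_L)}(S')$ when $S\subseteq S'$. Everything else is bookkeeping. (One can picture the whole construction as building a balanced binary search tree on $P^*$ sorted by real value, colouring each pillar by its depth, and ordering the pillars by increasing depth.)
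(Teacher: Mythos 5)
Your proof is correct and follows essentially the same route as the paper's: choose a median pillar to split $P^*$ roughly in half, put it $\prec^*$-first, give it a fresh colour, and recurse on each side. The only cosmetic difference is that you give the median colour $1$ and shift the recursive colours up by one, whereas the paper gives the median colour $k$ and extends the recursive colourings unchanged; your write-up also spells out the verification that the paper leaves terse.
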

    
    \begin{proof}
    	This is trivially true for $k=1$. We will argue by induction on $k$. Choose $p\in P^*$ such that the intervals $(0,p)$ and $(p,1)$ each contain at most $2^{k-1}-1$ pillars in $P^*$. Let $P_1 \subset P^*$ be the set of pillars less than $p$ and $P_2\subset P^*$ the set of pillars greater than $p$.
    	
    	By induction, for each $i\in \{1,2\}$, there is a total ordering $(P_i, \prec_i)$ of $P_i$ and colouring $c_i:P_i \rightarrow \{1,2,\ldots, k-1\}$ so that $(P_i, \prec_i, c_i)$ is a pillar assignment and the maximum degree of $(P_i, \prec_i)$ is at most $k-1$. Let $\prec^*$ be the total ordering of $P^*$ obtained from $\prec_1 \cup \prec_2$ by adding $p$ first. Let $c^*$ be the colouring of $P^*$ that extends $c_1$ and $c_2$ and has $c^*(p)=k$. Then any two pillars $p_1 \in P_1$ and $p_2 \in P_2$ are separated by the preceding pillar $p$, which is the only pillar of colour $k$. It follows that $(P^*, \prec^*, c^*)$ is a pillar assignment and the maximum degree of $(P^*, \prec^*)$ is most $k$.
    \end{proof}

\section{Main result}
In this section we prove the following, which implies Theorem~\ref{main-perm}.

\begin{proposition}
\label{thm:extends}
Every interval system $\I$ whose overlap graph has clique number $\omega$ has a complete pillar assignment using at most $\bigf{}$ colours.
\end{proposition}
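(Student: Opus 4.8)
The plan is to pick a pillar assignment $(P,\prec,c)$ of $\I$ that uses at most $\bigf{}$ colours, has maximum degree at most $\smallf{}$, and, subject to these two requirements, maximises the number of intervals coloured by $\phi_{(P,\prec,c)}$; the empty tuple shows such a tuple exists, and I claim every such tuple is complete. Supposing not, fix $I_0\in\I$ containing no pillar of $P$ and let $S=(a,b)$ be the segment of $P$ with $I_0\subseteq S$. I will build a pillar assignment using at most $\bigf{}$ colours, of maximum degree at most $\smallf{}$, that colours strictly more intervals — contradicting the choice of $(P,\prec,c)$. The new assignment has the form $(P\cup P^*,\prec',c\cup c^*)$, where $P^*$ is a finite set of pillars inside $S$, the order $\prec'$ extends $\prec$ with all of $P^*$ placed before all of $P$ (internally ordered by some $\prec^*$), and $c^*$ colours $P^*$. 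Because $P^*$ comes first, no interval loses its colour — a coloured interval either avoids $P^*$ and keeps its old pillar, or meets $P^*$ and is reassigned to a pillar of $P^*$ — while any interval contained in $S$ that meets $P^*$ becomes coloured; so it will suffice to place a pillar of $P^*$ inside $I_0$ and then check validity and the degree bound.

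For the degree, note that (since $P^*$ comes first) an interval with an end in a sub-segment of $S$ that is assigned to a pillar of $P$ must contain a pillar of $P$ but no pillar of $P^*$, and, having an end in $S$, it must cross exactly one endpoint of $S$. So if $P^*$ is chosen to contain a pillar of every interval of $\I$ crossing exactly one endpoint of $S$ — few well-placed pillars suffice, as the intervals crossing a given endpoint share a common sub-interval of $S$ — then every sub-segment $S'$ of $S$ receives contributions only from $P^*$, and $d_{(P\cup P^*,\prec')}(S')\le d_{(P^*,\prec^*)}(S')$, which Lemma~\ref{lemma:unordered} controls. For a segment $T$ of $P$ adjacent to $S$, the intervals with an end in $T$ that get reassigned all cross the endpoint of $S$ shared with $T$, and in the old assignment they were all assigned to that one shared pillar; so that pillar leaves the degree of $T$, and if $\prec^*$ and $P^*$ are arranged so that these reassigned intervals land on few pillars of $P^*$, the degree of $T$ does not grow (the constant term in $\smallf{}$ provides a little slack), while every other segment of $P$ keeps its old degree. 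This is where Lemma~\ref{lemma:counting} enters: the clique number of $G(\I)$ limits how much endpoint-crossing activity $S$ can sustain, which bounds $|P^*|\le 2^{\lceil\logconst{}\log_2(\omega)\rceil}-1$, so that Lemma~\ref{lemma:unordered} colours $P^*$ with only $\lceil\logconst{}\log_2(\omega)\rceil$ colours and maximum degree at most $\lceil\logconst{}\log_2(\omega)\rceil$.

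For condition~(1) of a pillar assignment, an offending monochromatic overlapping pair lies inside $P$ (excluded by $(P,\prec,c)$), inside $P^*$ (excluded by Lemma~\ref{lemma:unordered} applied to $(P^*,\prec^*,c^*)$), or mixes the two; to kill the mixed case I choose the colours of $c^*$ to avoid the colours of every ``dangerous'' pillar of $P$, meaning one carrying an interval that overlaps some interval contained in $S$. An interval contained in $S$ overlaps only intervals contained in $S$ or intervals with an end in $S$, and intervals contained in $S$ carry no pillar, so every dangerous pillar carries an interval with an end in $S$; hence there are at most $d_{(P,\prec)}(S)\le\smallf{}$ dangerous pillars, using at most $\smallf{}$ colours, and at least $\bigf{}-\smallf{}=\lceil\logconst{}\log_2(\omega)\rceil$ colours remain free for $c^*$.

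I expect the hardest part to be the degree bookkeeping near $S$: choosing $P^*$ and the order $\prec^*$ so that each segment of $P$ adjacent to $S$ ``sees'' at most about one new pillar, paid for by the pillar it loses. The delicate point is the geometry of how the intervals crossing the two endpoints of $S$ interleave inside $S$ — in particular whether a single pillar can meet all of them, and how to route the remaining reassignments (including intervals spanning all of $S$) so that neither adjacent segment is overloaded. Keeping $|P^*|$ within the bound $2^{\lceil\logconst{}\log_2(\omega)\rceil}-1$ required by Lemma~\ref{lemma:unordered}, through the Turán-type estimate of Lemma~\ref{lemma:counting}, is the other place where genuine work is needed.
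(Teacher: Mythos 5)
The central difficulty is the direction you orient the new pillars in the ordering. You place all of $P^*$ \emph{before} all of $P$, so that every interval passing through $S$ that meets $P^*$ gets \emph{reassigned} from its old pillar in $P$ to a pillar of $P^*$. The paper does the opposite: it appends $P^*$ \emph{after} $P$, so that no already-coloured interval ever changes its pillar, and only intervals contained in $S$ (previously uncoloured) can be newly assigned. With your orientation, the degree bookkeeping outside $S$ breaks. Consider a segment $T$ of $P$ far from $S$: an interval $I'$ with one end in $T$ that spans all of $S$ contains every pillar of $P^*$ and is therefore reassigned; its old pillar in $P$ need not vanish from $T$'s degree (other intervals with an end in $T$ may still be assigned to it), so $T$'s $(P\cup P^*,\prec')$-degree can strictly increase. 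Your assertion that ``every other segment of $P$ keeps its old degree'' is false for exactly these spanning intervals. Even for segments $T$ adjacent to $S$ the compensation you invoke does not hold: an interval with one end in $T$ and one end past the far endpoint of $S$ contains several pillars of $P$ and was in general \emph{not} assigned to the shared endpoint of $S$, so that pillar need not leave $T$'s degree when the interval is reassigned. Since the extremal choice requires the new assignment to have maximum degree at most $\smallf{}$ and the old one may already be tight, any increase is fatal. A related problem appears in your treatment of condition~(1): with $P^*$ first, an interval assigned to a pillar of $P^*$ need not be contained in $S$ (it can cross one or both ends of $S$), so the ``dangerous'' pillars of $P$ are not only those carrying an interval overlapping something contained in $S$, and the bound $d_{(P,\prec)}(S)\le\smallf{}$ on their number does not apply.

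The paper's orientation sidesteps all of this. With $P$ ordered before $P^*$, segments of $P\cup P^*$ disjoint from $S$ have their $(P,\prec)$-degree unchanged, and a subsegment $S^*\subseteq S$ has degree at most $d_{(P_1,\prec_1)}(S^*)+d_{(P^*,\prec^*)}(S^*)\le d_{P_1}(S^*)+\lceil 2\log_2\omega\rceil$. The work then goes into the claim that one can choose $P^*$ with $|P^*|\le \omega^2-1$ so that $d_{P_1}(S^*)\le\omega+\const{}$ for every such subsegment, which is where Lemma~\ref{lemma:counting} is used quantitatively (to bound how many new pillars are needed before a segment runs out of $P_1$-degree budget). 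Your use of Lemma~\ref{lemma:counting} is only gestured at, and your proposed $P^*$ (a pillar near each end of $S$ hitting all one-endpoint-crossers, plus one in $I_0$) does not control how the pillars of $P$ that still contribute to subsegments of $S$ affect their degree -- unless you reverse the ordering as the paper does, at which point you would need the paper's $P_1$-degree claim rather than your ``hit all crossers'' condition. I'd suggest redoing the argument with $P$ placed first: the rest of your framework (the colour-avoidance for $c^*$, the use of Lemma~\ref{lemma:unordered}) then slots in cleanly, but the heart of the proof becomes the quantitative claim bounding $|P^*|$ and the $P_1$-degree of subsegments, which your write-up leaves as ``genuine work.''
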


\begin{proof}
    The proposition is trivially true if $\omega \leq 1$. So fix an interval system $\I$ whose overlap graph has clique number $\omega \geq 2$. Throughout the proof, all colours are in $\{1,2,\ldots, \bigf{} \}$. Choose a pillar assignment $(P, \prec, c)$ using these colours so that the maximum degree of $(P, \prec)$ is at most $\smallf{}$. Subject to this, choose $(P, \prec, c)$ so that $\phi_{(P, \prec, c)}$ colours as many intervals in $\I$ as possible. Such a pillar assignment exists. Suppose, for a contradiction, that $(P, \prec, c)$ is not complete.
    
    Thus, there is an interval $I \in \I$ that is contained in some segment $S=(p^-, p^+)$ of $P$. Let $P_1$ denote the set of all $p \in P$ so that there exists an interval in $\I$ with an end in $S$ that is assigned to $p$. So $|P_1|=d_{(P, \prec)}(S)$. 
    
    \begin{claim}
    There exists a set $P^* \subset S$ of at most $\omega^2-1$ pillars of $\I$ so that $I$ contains a pillar in $P^*$ and the $P_1$-degree of each segment of $P \cup P^*$ that is contained in $S$ is at most $\omega+\const{}$.
    \end{claim}
    \begin{proof}
        Choose pillars $p^- = p_0^* < p_1^* <\ldots<p_{t+1}^*=p^+$ so that $d_{P_1}(p_0^*, p_1^*)=\ldots = d_{P_1}(p_{t-1}^*, p_{t}^*)=\omega+\const{}$ and $1\leq d_{P_1}(p_t^*, p_{t+1}^*)\leq \omega+8$. Let $P_1^* = \{p_i^*: i \in \{1,2,\ldots, t\}\}$. Let $p_1$ be the largest pillar in $P_1$ that is at most $p^-$, as ordered by the real line. Similarly, let $p_2$ be the smallest pillar in $P_1$ that is at least $p^+$, as ordered by the real line. Then by Lemma~\ref{lemma:counting},
        \begin{align*}
        |P_1^*|(\omega+\const{}) &< d_{P_1 \cup P_1^*}(p_1, p_2)
           \\&\leq \omega(|P_1|+|P_1^*|)
           \\&\leq \omega(\smallf{}+|P_1^*|).
        \end{align*} It follows then, since $\omega \geq 2$, that $|P_1^*|<\omega^2-1$. Then the claim holds with $P^* = P_1^* \cup \{p^*\}$, where $p^*$ is any pillar contained in $I$.
    \end{proof}
    Let $P^*$ be a set of pillars as in the claim. By Lemma~\ref{lemma:unordered}, there exist a total ordering $(P^*, \prec^*)$ of $P^*$ and a colouring $c^*$ of $P^*$ using at most $\lceil \log_2(\omega^2)\rceil = \lceil 2\log_2(\omega)\rceil$ colours so that $(P^*, \prec^*, c^*)$ is a pillar assignment and the maximum degree of $(P^*, \prec^*)$ is at most $\lceil 2\log_2(\omega)\rceil$. Choose the colours so that no pillar in $P_1$ is the same colour as a pillar in $P^*$. Then $(P \cup P^*, \prec\cup \prec^*, c \cup c^*)$ is a pillar assignment using at most $\bigf{}$ colours.
	
	We claim that the maximum degree of $(P \cup P^*, \prec\cup \prec^*)$ is at most $\smallf{}$. Let $S^*$ be a segment of $P \cup P^*$. If $S^*$ is disjoint from $S$, then no interval in $\I$ with one end in $S^*$ and one end in $S$ is assigned to a pillar in $P^*$ with respect to $(P \cup P^*, \prec\cup \prec^*)$. So in this case the $(P \cup P^*, \prec\cup \prec^*)$-degree of $S^*$ equals the $(P, \prec)$-degree of $S^*$, which is at most $\smallf{}$. 
	
	Now suppose that $S^*$ is contained in $S$. Let $(P_1,\prec_1)$ denote the restriction of $(P, \prec)$ to $P_1$. Observe that $d_{(P, \prec)}(S^*)\leq d_{(P_1, \prec_1)}(S^*)$ by the definition of $P_1$. Then the $(P \cup P^*, \prec\cup \prec^*)$-degree of $S^*$ is at most \begin{align*}d_{(P, \prec)}(S^*)+d_{(P^*, \prec^*)}(S^*)&\leq d_{(P_1, \prec_1)}(S^*)+d_{(P^*, \prec^*)}(S^*)\\&\leq d_{P_1}(S^*)+d_{(P^*, \prec^*)}(S^*) \\&\leq \smallf{}.\end{align*}This shows a contradiction to the choice of $(P, \prec, c)$ and completes the proof.
\end{proof}

We remark that the constants on the smaller order terms in Proposition~\ref{thm:extends} can be slightly improved for $\omega$ sufficiently large. However it seems that new ideas would be needed to find a pillar assignment using $(1-\epsilon)\omega$ colours for a fixed $\epsilon>0$ and $\omega$ sufficiently large. It would be interesting to know, at least asymptotically, the smallest number of sets needed to partition the vertex set of any circle graph with clique number $\omega$ into permutation graphs.

\section*{Acknowledgments}
The authors would like to thank Jim Geelen for valuable discussions on circle graphs and helpful comments on the presentation.

\bibliography{Circlegraphbib}

\providecommand{\bysame}{\leavevmode\hbox to3em{\hrulefill}\thinspace}
\providecommand{\MR}{\relax\ifhmode\unskip\space\fi MR }
\providecommand{\MRhref}[2]{%
  \href{http://www.ams.org/mathscinet-getitem?mr=#1}{#2}
}
\providecommand{\href}[2]{#2}
\begin{thebibliography}{10}

\bibitem{ageev1996triangle}
A.~Ageev, \emph{A triangle-free circle graph with chromatic number 5}, Discrete
  Mathematics \textbf{152} (1996), no.~1-3, 295--298.

\bibitem{capoyleas1992turan}
V.~Capoyleas and J.~Pach, \emph{A {T}ur{\'a}n-type theorem on chords of a
  convex polygon}, Journal of Combinatorial Theory, Series B \textbf{56}
  (1992), no.~1, 9--15.

\bibitem{golumbicBook}
M.~Golumbic, \emph{Algorithmic graph theory and perfect graphs}, second ed.,
  vol.~57, Elsevier Science B.V., Amsterdam, 2004.

\bibitem{gyarfas1985chromatic}
A.~Gy{\'a}rf{\'a}s, \emph{On the chromatic number of multiple interval graphs
  and overlap graphs}, Discrete mathematics \textbf{55} (1985), no.~2,
  161--166.

\bibitem{kim2018classes}
R.~Kim, O.~Kwon, S.~Oum, and V.~Sivaraman, \emph{Classes of graphs with no long
  cycle as a vertex-minor are polynomially $\chi$-bounded}, arXiv preprint
  arXiv:1809.04278 (2018).

\bibitem{kostochka1988upper}
A.~Kostochka, \emph{Upper bounds on the chromatic number of graphs}, Trudy
  Inst. Mat \textbf{10} (1988), 204--226.

\bibitem{kostochka2004coloring}
\bysame, \emph{Coloring intersection graphs of geometric figures with a given
  clique number}, Contemporary mathematics \textbf{342} (2004), 127--138.

\bibitem{kostochka1997covering}
A.~Kostochka and J.~Kratochv{\'\i}l, \emph{Covering and coloring polygon-circle
  graphs}, Discrete Mathematics \textbf{163} (1997), no.~1-3, 299--305.

\bibitem{krawczyk2017line}
T.~Krawczyk and B.~Walczak, \emph{On-line approach to off-line coloring
  problems on graphs with geometric representations}, Combinatorica \textbf{37}
  (2017), no.~6, 1139--1179.

\bibitem{scott2018chiSurvey}
A.~Scott and P.~Seymour, \emph{A survey of $\chi$-boundedness},
  arXiv:1812.07500 (2018).

\end{thebibliography}

\end{document}